\documentclass[11pt]{article}
\usepackage[T1]{fontenc}
\usepackage{lmodern,amsmath,amsthm,amsfonts,amssymb,graphicx,float,microtype,thmtools,underscore,mathtools,anyfontsize,thm-restate,graphicx}
\usepackage[usenames,dvipsnames,svgnames,table]{xcolor}
\usepackage[unicode=true]{hyperref}
\hypersetup{
	colorlinks,
	linkcolor={black},
	citecolor={black},
	urlcolor={blue!60!black},
	pdftitle={2-Layer Graphs with Bounded Pathwidth},
	pdfauthor={David R. Wood}
}
\usepackage{breakurl}
\usepackage[noabbrev,capitalise,nameinlink]{cleveref}
\crefname{lem}{Lemma}{Lemmas}
\crefname{thm}{Theorem}{Theorems}
\crefname{cor}{Corollary}{Corollaries}
\crefname{prop}{Proposition}{Propositions}
\crefname{conj}{Conjecture}{Conjectures}
\crefname{open}{Open Problem}{Open Problems}
\crefformat{equation}{(#2#1#3)}
\Crefformat{equation}{Equation #2(#1)#3}
\usepackage[shortlabels]{enumitem}
\setlist[itemize]{topsep=0ex,itemsep=0ex,parsep=0.25ex}
\setlist[enumerate]{topsep=0ex,itemsep=0ex,parsep=0.25ex}
\newcommand{\defn}[1]{\textcolor{Maroon}{\emph{#1}}}

\newcommand{\NN}{\mathbb{N}}

\usepackage[longnamesfirst,numbers,sort&compress]{natbib}
\makeatletter
\def\NAT@spacechar{~}
\makeatother
\usepackage[bmargin=29mm,tmargin=29mm,lmargin=30mm,rmargin=30mm]{geometry}
\setlength{\footnotesep}{\baselinestretch\footnotesep}
\setlength{\parindent}{0cm}
\setlength{\parskip}{1.25ex}
\allowdisplaybreaks

\DeclarePairedDelimiter{\floor}{\lfloor}{\rfloor}


\renewcommand{\geq}{\geqslant}
\renewcommand{\leq}{\leqslant}
\renewcommand{\emptyset}{\varnothing}

\renewcommand{\thefootnote}{\fnsymbol{footnote}}
\allowdisplaybreaks
\theoremstyle{plain}
\newtheorem{thm}{Theorem}
\newtheorem{lem}[thm]{Lemma}

\newtheorem{cor}[thm]{Corollary}

\newtheorem{obs}[thm]{Observation}

\begin{document}
	
\title{\bf 2-Layer Graph Drawings with Bounded Pathwidth}
\author{David R. Wood\footnotemark[2]}
\date{12th July 2022}
	
\footnotetext[2]{\,School of Mathematics, Monash   University, Melbourne, Australia  (\texttt{david.wood@monash.edu}). Research supported by the Australian Research Council.}
	
\maketitle

\renewcommand{\thefootnote}{\arabic{footnote}}

\begin{abstract}
We determine which properties of 2-layer drawings characterise bipartite graphs of bounded pathwidth.
\end{abstract}
	
\section{Introducction}	

A \defn{2-layer drawing} of a bipartite graph $G$ with bipartition $\{A,B\}$ positions the vertices in $A$ at distinct points on a horizontal line, and positions the vertices in $B$ at distinct points on a different horizontal line, and draws each edge as a straight line-segment. 2-layer graph drawings are of fundamental importance in graph drawing research and have been widely studied~\citep{BDDEW19,Dujmovic-etal-Algo08,Suderman-IJCGA04,DFHKLMNRRSWW06,EadesWhitesides94,HS72,EadesWormald94,DujWhi-Algo04,DDEL14,Nagamochi05a,Nagamochi05b,DPW04,CSW04}. As illustrated in \cref{Caterpillar}, the following basic connection between 2-layer graph drawings and graph pathwidth\footnote{A \defn{path-decomposition} of a graph $G$ is a sequence $(B_1,\dots,B_n)$ of subsets of $V(G)$ (called \defn{bags}), such that $B_1\cup\dots\cup B_n=V(G)$, and for $1\leq i<j<k\leq n$ we have $B_i\cap B_k \subseteq B_j$; that is, for each vertex $v$ the bags containing $v$ form a non-empty sub-sequence of $(B_1,\dots,B_n)$. The \defn{width} of a path-decomposition $(B_1,\dots,B_n)$ is $\max_i|B_i|-1$. The \defn{pathwidth} of a graph $G$ is the minimum width of a path-decomposition of $G$. Pathwidth is a fundamental parameter in graph structure theory~\citep{RS-I,BRST91,Diestel-CPC95,Bodlaender-TCS98} with many connections to graph drawing~\citep{DMW05,Hliney-JCTB03,Suderman-IJCGA04,BDDEW19,DFHKLMNRRSWW06,BCDM20,DMY21,FLW-JGAA03} A \defn{caterpillar} is a tree such that deleting the leaves gives a path. It is a straightforward exercise to show that a connected graph has pathwidth 1 if and only if it is a caterpillar.} is folklore:

\begin{obs}
\label{Folklore}
A connected bipartite graph $G$ has a 2-layer drawing with no crossings if and only if $G$ is a caterpillar if and only if $G$ has pathwidth 1.
\end{obs}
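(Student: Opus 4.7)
The plan is to prove the chain of equivalences: $G$ has a 2-layer drawing with no crossings $\Leftrightarrow$ $G$ is a caterpillar $\Leftrightarrow$ $G$ has pathwidth $1$. The second equivalence is the exercise alluded to in the footnote. In one direction, a caterpillar with spine $s_1,\dots,s_k$ has a width-$1$ path-decomposition whose bags are the edges $\{s_i,s_{i+1}\}$ together with a bag $\{s_i,\ell\}$ inserted in sequence for each leaf $\ell$ incident to $s_i$. In the other, pathwidth $1$ forbids both $K_3$ and the spider $Y$ (a vertex of degree $3$ with three attached paths of length two) as subgraphs, so a connected graph of pathwidth $1$ is a tree in which every vertex has at most two non-leaf neighbours, i.e.\ a caterpillar.

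For the main equivalence, the easy direction $(\Leftarrow)$ is explicit: take a caterpillar with spine $s_1,\dots,s_k$, place $s_i$ at $x$-coordinate $i$ on the layer determined by its bipartition class, and place the leaves of $s_i$ at distinct fractional $x$-coordinates in the open interval $(i,i+1)$ on the opposite layer. A direct case check confirms that no two edges cross.

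The substantive direction is $(\Rightarrow)$. Suppose $G$ is connected bipartite with a crossing-free 2-layer drawing, and label the top vertices $a_1,\dots,a_p$ and the bottom vertices $b_1,\dots,b_q$ left-to-right. The non-crossing condition translates directly into the combinatorial statement that whenever $i<i'$, every bottom-neighbour of $a_i$ has index at most that of every bottom-neighbour of $a_{i'}$; writing $l_i,r_i$ for the smallest and largest bottom-indices adjacent to $a_i$, this gives $r_i\le l_{i+1}$. I would then argue, using connectedness and the absence of isolated vertices, that $N(a_i)=\{b_j:l_i\le j\le r_i\}$ (otherwise a ``skipped'' $b_j$ would need a top-neighbour contradicting the ordering) and that $r_i=l_{i+1}$ (otherwise $G$ splits at the gap, contradicting connectedness). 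Summing $|N(a_i)|=r_i-l_i+1$ then telescopes to $|E(G)|=p+q-1=|V(G)|-1$, so $G$ is a tree. Finally, the path $P=a_1b_{r_1}a_2b_{r_2}\cdots a_p$ visits every top vertex and every bottom vertex that has two or more top-neighbours; by the symmetric argument applied from the bottom row, any bottom vertex off $P$ has a unique top-neighbour and is therefore a leaf. Hence all non-leaves of $G$ lie on $P$, and, being the vertex set of a subtree, form a subpath of $P$; so $G$ is a caterpillar.

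The main obstacle, and where care is needed, is the bookkeeping in the forward direction: translating the geometric non-crossing condition into the monotone-interval data $r_i\le l_{i+1}$, and using connectedness to force the intervals to meet precisely at endpoints. Once that structure is in hand, the edge count and the spine identification are automatic.
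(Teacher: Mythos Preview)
The paper does not actually prove this observation---it is stated as folklore, with only a figure and a footnote remarking that the pathwidth-$1$/caterpillar equivalence is ``a straightforward exercise''. So there is no proof in the paper to compare against; your write-up supplies exactly what the paper leaves implicit.

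Your argument is sound in substance, with one small wrinkle in the final step. The walk $P = a_1 b_{r_1} a_2 b_{r_2} \cdots a_p$ need not be a simple path: if some $a_{i}$ with $1<i<p$ has a unique neighbour (so $l_{i} = r_{i}$), then $b_{r_{i-1}} = b_{r_{i}}$ and the walk revisits that vertex. This does not actually break anything---such an $a_{i}$ is itself a leaf---but then the clause ``being the vertex set of a subtree, form a subpath of $P$'' is not yet a proof, since containment in the vertex set of a walk does not by itself force a path. The clean fix is to bypass $P$ and argue directly from your interval structure that every vertex has at most two non-leaf neighbours: for $a_i$, any $b_j$ with $l_i < j < r_i$ is adjacent only to $a_i$ and hence a leaf; and for $b_j$ with neighbour set $\{a_{K},\dots,a_{K'}\}$, any $a_k$ with $K < k < K'$ satisfies $l_k = r_k = j$ and is a leaf. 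Thus the subtree of non-leaves has maximum degree $2$, so it is a path. (A second tiny omission: the telescoping to $p+q-1$ tacitly uses $l_1=1$ and $r_p=q$, which also follow from connectedness since $b_1$ and $b_q$ must have neighbours.)
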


\begin{figure}[!h]
	\centering
	\includegraphics{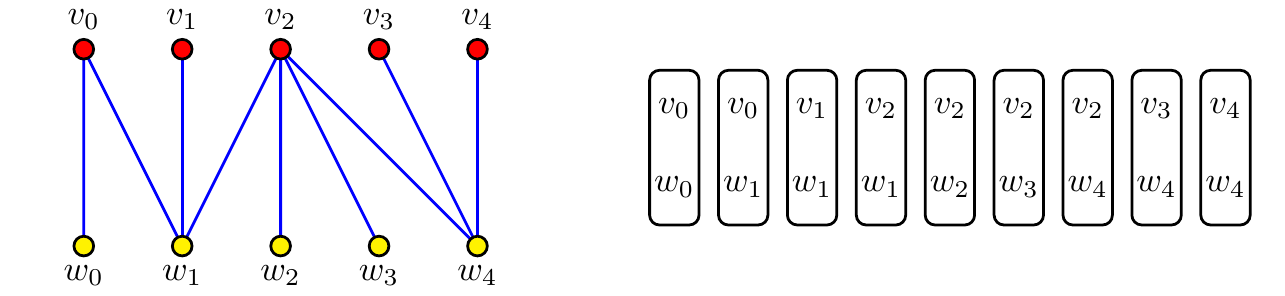}
	\caption{A caterpillar drawn on 2-layers with no crossings, and the corresponding path-decompostion with width 1. }
	\label{Caterpillar}
\end{figure}

Motivated by this connection, we consider (and answer) the following question: what properties of 2-layer drawings characterise bipartite graphs of bounded pathwidth?

A \defn{matching} in a graph $G$ is a set of edges in $G$, no two of which are incident to a common vertex. A \defn{$k$-matching} is a matching of size $k$. In a 2-layer drawing of a graph $G$, a \defn{$k$-crossing} is a set of $k$ pairwise crossing edges (which necessarily is a $k$-matching). 
Excluding a $k$-crossing is not enough to guarantee bounded pathwidth. For example, as illustrated in \cref{TwoLayerTree}, if $T_h$ is the complete binary tree of height $h$, then $T_h$ has a 2-layer drawing with no 3-crossing, but it is well known that $T_h$ has pathwidth $\floor{h/2}+1$. Even stronger, if $G_h$ is the $h\times h$ square grid graph, then $G_h$ has a 2-layer drawing with no 3-crossing, but $G_h$ has treewidth and pathwidth $h$.

\begin{figure}[!h]
\centering
\includegraphics{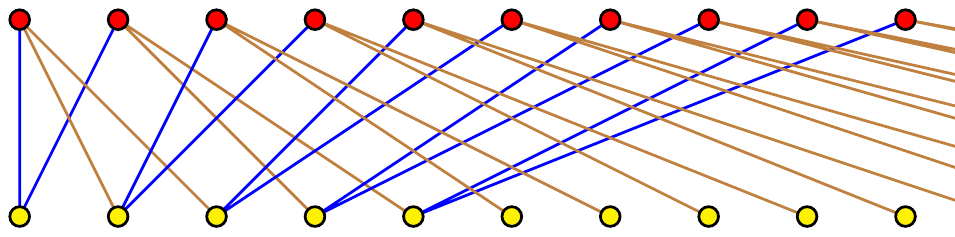}
\includegraphics{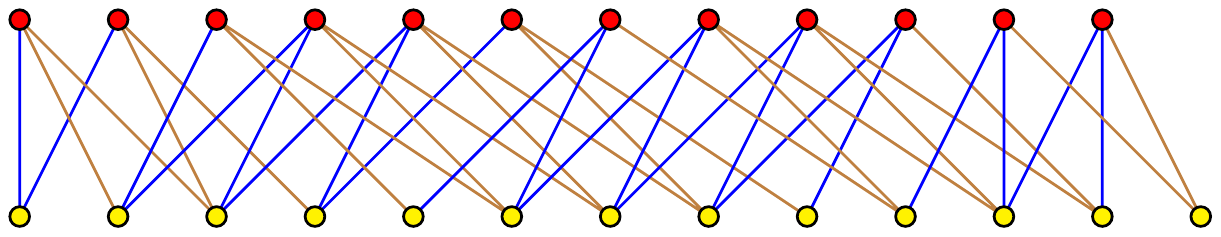}
\caption{2-layer drawings of a complete binary tree and a $5\times 5$ grid. There is no 3-crossing since each edge is assigned one of two colours, so that monochromatic edges do not cross. }
\label{TwoLayerTree}
\end{figure}



\citet{ALFS20} showed that every graph that has a 2-layer drawing with at most $k$ crossings on each edge has pathwidth at most $k+1$. However, this property does not characterise bipartite graphs with bounded pathwidth. For example, as illustrated in \cref{TwoLayerStar}, if $S_n$ is the 1-subdivision of the $n$-leaf star, then $S_n$ is bipartite with pathwidth 2, but in every 2-layer drawing of $S_n$, some edge has at least $(n-1)/2$ crossings. 

\begin{figure}[!h]
	\centering
	\includegraphics{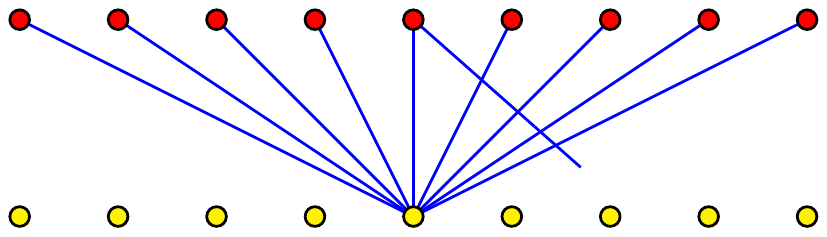}
	\caption{Every 2-layer drawing of $S_9$ has at least 4 crossings on some edge. }
	\label{TwoLayerStar}
\end{figure}


These examples motivate the following definition. A set $S$ of edges in a 2-layer drawing is \defn{non-crossing} if no two edges in $S$ cross. In a 2-layer drawing of a graph $G$, an \defn{$(s,t)$-crossing} is a pair $(S,T)$ where  $S$ is a non-crossing $s$-matching, $T$ is a non-crossing $t$-matching, and every edge in $S$ crosses every edge in $T$; as illustrated in \cref{stCrossing}. 

\begin{figure}[!h]
\centering
\includegraphics{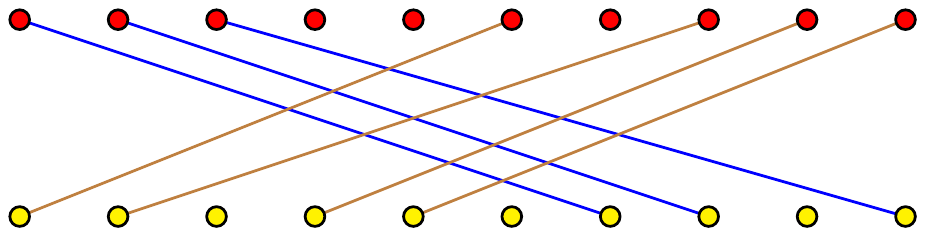}
\caption{Example of a $(3,4)$-crossing.}
\label{stCrossing}
\end{figure}

We show that excluding a $k$-crossing and an $(s,t)$-crossing guarantees bounded pathwidth. 

\begin{thm}
\label{2LayerPathwidth}
For all $k,s,t\in\NN$, every bipartite graph $G$ that has a 2-layer drawing with no $(k+1)$-crossing and no $(s,t)$-crossing has pathwidth at most 
$8k^2(t-1) +4k^2 (s-1)^2(s-2)+5k+4$.
\end{thm}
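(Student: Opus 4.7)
The plan is to bound $\pw(G)$ via the vertex-separation number of $G$ with respect to the left-to-right order $\pi$ on $V(G)$ induced by the drawing; since these two parameters agree up to $1$, it suffices to give a uniform bound on the frontier size $|F_i|$, where $F_i = \{v \in V(G) : v \leq_\pi i$ and some neighbour of $v$ has position $>_\pi i\}$. By symmetry between the two layers, it is enough to bound $|F_i \cap A|$ for each $i$.

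Fix $i$ and, for each $v \in F_i \cap A$, choose a \emph{witness} neighbour $b_v \in B$ with position $>_\pi i$ (say, the rightmost one), giving the witness edge $e_v = v b_v$ straddling the vertical line at $i$. List $F_i \cap A$ in positional order as $v_1, \dots, v_N$ and consider the sequence of positions $(b_{v_1}, \dots, b_{v_N})$. A strictly decreasing subsequence of length $k+1$ here would yield $k+1$ pairwise crossing edges $e_{v_j}$, which is forbidden; Erd\H{o}s--Szekeres therefore supplies a non-decreasing subsequence of length at least $N/k$, producing a pairwise non-crossing collection $\mathcal{C}$ of at least $N/k$ witness edges, all straddling the line at~$i$.

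Partition $\mathcal{C}$ by $b$-endpoint. Either (a)~some $b$ receives $\geq s$ edges (a \emph{fan} of size $s$ at $b$), or (b)~choosing one representative per $b$-class gives a non-crossing matching $M$ of size $\geq |\mathcal{C}|/(s-1) \geq N/(k(s-1))$. In~(b), $M$ is the candidate $S$-side of an $(s,t)$-crossing. A direct geometric check shows that any edge of $G$ crossing every edge of $M$ must lie in one of two ``opposite quadrants'' determined by the extreme endpoints of $M$; extracting $t$ pairwise non-crossing such transversals via a further Erd\H{o}s--Szekeres argument (using the no-$(k+1)$-crossing hypothesis once more) yields the forbidden $(s,t)$-crossing once $N$ exceeds the claimed threshold. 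Case~(a) is handled analogously: the endpoints of a large fan can be used in place of the matching endpoints, and a similar transversal extraction completes the argument.

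The main obstacle I anticipate is producing the transversal $T$ in case~(b): the matching $M$ alone imposes no constraint on the remainder of $G$, so the proof must leverage the assumption that $|F_i \cap A|$ is large in order to force enough ``reverse-direction'' edges in the drawing to serve as $T$. The three nested Erd\H{o}s--Szekeres applications---on the witness sequence, on the $b$-partition to extract $M$, and on the candidates for $T$---together with the additional case analysis for fans are what produce the $k^2(s-1)^2(s-2)$ and $k^2(t-1)$ factors of the claimed bound $8k^2(t-1) + 4k^2(s-1)^2(s-2) + 5k + 4$.
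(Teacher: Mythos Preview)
Your approach has a fatal gap at the very first step: the vertex-separation number of $G$ with respect to the left-to-right order induced by the drawing is \emph{not} bounded in terms of $k,s,t$. Take $G$ to be a perfect matching $a_1b_1,\dots,a_nb_n$ with all of $A=\{a_1,\dots,a_n\}$ drawn strictly to the left of all of $B=\{b_1,\dots,b_n\}$ (say $a_j$ at $x=j$ and $b_j$ at $x=n+j$). There are no crossings at all, so the hypotheses hold with $k=1$ and any $s,t\geq 1$, and $\pw(G)=1$. Yet at the cut immediately after $a_n$ every vertex of $A$ lies in the frontier, so $|F_i\cap A|=n$. In your notation $N=n$, the Erd\H{o}s--Szekeres step returns all $n$ witness edges, the $b$-partition is trivial, and you obtain a non-crossing matching $M$ of size $n$---but nothing in $G$ crosses any edge of $M$, so there is no way to manufacture the $T$-side of an $(s,t)$-crossing. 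The ``obstacle you anticipate'' is not a technicality to be patched; it is the whole difficulty, and the drawing order simply does not witness bounded pathwidth.

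The paper avoids this by \emph{not} using the raw vertex order. Instead it fixes a maximal non-crossing matching $X=\{e_1,\dots,e_n\}$ and builds the path-decomposition along $X$: the bag $V_i$ contains (roughly) the endpoints of $e_i$ together with the heads of all arcs that cross $e_i$, after first orienting $G$ with out-degree $\leq k$ via a Dilworth decomposition into $k$ caterpillar forests. Maximality of $X$ is then used repeatedly (through \cref{Useful}) to show that few arcs can cross any fixed $e_i$, and the no-$(s,t)$-crossing hypothesis bounds the number of arcs that cross a block of $s$ consecutive $e_i$'s. In the matching example above, $X$ would be the whole edge set and each bag has size $2$, as it should.
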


We prove the following converse to \cref{2LayerPathwidth}. 

\begin{thm}
\label{Converse}
For any $k\in\mathbb{N}$ every bipartite graph $G$ with pathwidth at most $k$ has a 2-layer drawing with no $(k+2)$-crossing and no $(k+1,k+1)$-crossing. 
\end{thm}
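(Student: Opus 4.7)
The plan is to construct the required drawing from a nice path-decomposition $(B_0,\ldots,B_N)$ of $G$ of width $k$, with $B_0=B_N=\emptyset$ and consecutive bags differing by a single introduction or forget. Let $\sigma(v)$ denote the step at which $v$ is introduced, giving a total ordering on $V(G)$. I would place the vertices of $A$ on the top line in $\sigma$-order and the vertices of $B$ on the bottom line in $\sigma$-order; the remainder of the proof is to verify the two crossing bounds via bag-counting arguments.

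For the absence of a $(k+2)$-crossing, suppose for contradiction that pairwise-crossing edges $a_1b_1,\ldots,a_{k+2}b_{k+2}$ exist, indexed so that $\sigma(a_1)<\cdots<\sigma(a_{k+2})$ and $\sigma(b_1)>\cdots>\sigma(b_{k+2})$. By the $A$-$B$ symmetry of the drawing, assume the latest-introduced endpoint of the crossing is $a_{k+2}$ and set $t^\ast=\sigma(a_{k+2})$. Since the edge $a_{k+2}b_{k+2}$ exists and $b_{k+2}$ was introduced earlier, both $a_{k+2}$ and $b_{k+2}$ lie in $B_{t^\ast}$. For each $i<k+2$ with $\sigma(b_i)>t^\ast$, the edge $a_ib_i$ is not yet complete at $t^\ast$, forcing $a_i\in B_{t^\ast}$; since $|B_{t^\ast}|\leq k+1$, at most $k-1$ such indices exist. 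The remaining indices (where $\sigma(b_i)\leq t^\ast$) form a sub-crossing of size at least $2$ whose edges lie entirely in the prefix decomposition $(B_0,\ldots,B_{t^\ast})$, and iterating the argument with strictly decreasing witness time terminates in the desired contradiction.

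For the absence of a $(k+1,k+1)$-crossing, first observe that since $S$ and $T$ are non-crossing matchings, each has its $A$- and $B$-endpoints in identical relative order. Combined with the requirement that every $S$-edge cross every $T$-edge, this forces all $A$-endpoints of $S$ to lie strictly to the left of all $A$-endpoints of $T$ on the top line, with the analogous reversal on the bottom line---a rigid ``$X$-shape''. Considering the latest-introduced vertex among the $2(k+1)$ endpoints of $S\cup T$, a bag-counting argument analogous to the one above (but exploiting the two non-crossing families rather than a single pairwise-crossing family) forces $k+2$ distinct vertices into a common bag, yielding the required contradiction.

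The main obstacle is the liveness analysis common to both arguments: at a witness time $t^\ast$, only endpoints whose partner on the other layer has not yet been introduced are immediately forced to remain in the bag, so the iterated argument must carefully track which edges still have both endpoints alive. A cleaner alternative would be to take $t^\ast$ to be the maximum birth time $\max_i\max(\sigma(a_i),\sigma(b_i))$ across the crossing, which may allow a single-shot bag-counting argument that handles all cases uniformly without recursion.
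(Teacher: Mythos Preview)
Your recursive argument for the $(k+2)$-crossing does not close. Under your own assumption that $a_{k+2}$ is the overall latest-introduced endpoint, every $b_i$ satisfies $\sigma(b_i)<t^\ast$, so the case ``$\sigma(b_i)>t^\ast$'' is vacuous and all $k+1$ remaining edges pass untouched to the recursion. Each iteration therefore simply discards one edge (the one whose endpoint was latest) without ever forcing any additional vertex into the witness bag; the crossing size runs $k+2,\,k+1,\,\dots,\,1$ and at no step do you place more than two vertices into a single bag, so no contradiction ever materialises. Your proposed alternative $t^\ast=\max_i\max(\sigma(a_i),\sigma(b_i))$ has the same defect: for every $i$ other than the maximising index, both endpoints of $a_ib_i$ may already have been forgotten before $t^\ast$, and nothing pins them to $B_{t^\ast}$.

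The fix---which is what the paper does---is essentially the opposite choice. Associate to each edge the interval $I_i=[\min(\sigma(a_i),\sigma(b_i)),\,\max(\sigma(a_i),\sigma(b_i))]$; the pairwise-crossing condition makes these intervals pairwise intersecting, so by the Helly property they share a common point $p$ (equivalently, take $p=\min_i\max(\sigma(a_i),\sigma(b_i))$). For every $i$ the earlier-introduced endpoint of $a_ib_i$ then survives in $B_p$, yielding $k+2$ distinct vertices in one bag in a single shot, with no recursion. Your sketch for the $(k+1,k+1)$-crossing is too vague to assess on its own, and since it appeals to the ``analogous'' recursive bag-counting it inherits the same gap; the paper handles that part instead by a short explicit case analysis on the relative order of the innermost endpoints of the X-shape, which again pins $s+1$ vertices to a single bag directly.
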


\cref{2LayerPathwidth,Converse} together establish the following rough characterisation of  bipartite graphs with bounded pathwidth, thus answering the opening question.

\begin{cor}
A class $\mathcal{G}$ of bipartite graphs has bounded pathwidth if and only if 
there exists $k,s,t\in\NN$ such that every graph in $\mathcal{G}$ has a 2-layer drawing with no $k$-crossing and no $(s,t)$-crossing.
\end{cor}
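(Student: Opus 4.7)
The plan is simply to observe that this corollary is the direct combination of \cref{2LayerPathwidth,Converse}, so the proof is essentially bookkeeping on the quantifiers. I would split into the two implications of the biconditional and apply one theorem in each direction.

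For the forward direction, suppose $\mathcal{G}$ has bounded pathwidth, so there exists $k\in\NN$ such that every graph in $\mathcal{G}$ has pathwidth at most $k$. Applying \cref{Converse} to each $G\in\mathcal{G}$ yields a 2-layer drawing of $G$ with no $(k+2)$-crossing and no $(k+1,k+1)$-crossing. Taking the crossing parameter to be $k+2$ and the $(s,t)$-parameters to be $(k+1,k+1)$ gives the required uniform constants.

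For the reverse direction, suppose there exist $k,s,t\in\NN$ such that every graph in $\mathcal{G}$ admits a 2-layer drawing with no $k$-crossing and no $(s,t)$-crossing. Then \cref{2LayerPathwidth}, applied with the parameter $k-1$ in place of $k$ (since excluding a $k$-crossing is the same as excluding a $((k-1)+1)$-crossing), shows that every $G\in\mathcal{G}$ has pathwidth at most $8(k-1)^2(t-1)+4(k-1)^2(s-1)^2(s-2)+5(k-1)+4$, which is a constant independent of $G$. Hence $\mathcal{G}$ has bounded pathwidth.

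There is no hard step here; the only mild subtlety is reconciling the off-by-one between the ``no $k$-crossing'' phrasing in the corollary and the ``no $(k+1)$-crossing'' phrasing in \cref{2LayerPathwidth}, which is handled by reindexing. The corollary's parameters $(k,s,t)$ do not need to match those of either theorem exactly, since the two directions are proved with different constants; only the existence of uniform $k,s,t$ is asserted.
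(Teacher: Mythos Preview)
Your proposal is correct and matches the paper's approach: the paper does not give a separate proof of the corollary but simply states that \cref{2LayerPathwidth,Converse} together establish it, and your argument spells out exactly that combination. The off-by-one reindexing you note is the only bookkeeping involved, and you handle it correctly.
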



%




\section{Proofs}

\begin{lem}
\label{Useful}	
Let $G$ be a bipartite graph with bipartition $A,B$, where each vertex in $A$ has degree at least $1$ and each vertex in $B$ has degree at most $d$. Assume that $G$ has a 2-layer drawing with no $(k+1)$-crossing and no non-crossing $(\ell+1)$-matching. Then $|A|\leq k\ell d$.
\end{lem}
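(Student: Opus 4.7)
The plan is to reduce to a matching and then apply the Erd\H{o}s--Szekeres theorem. Since every vertex of $A$ has degree at least $1$, I would first pick one incident edge per $A$-vertex to obtain an edge set $E'$ with $|E'|=|A|$, in which every $A$-vertex has degree exactly $1$ and every $B$-vertex still has degree at most $d$. Two edges of $E'$ share a vertex only when they share a $B$-endpoint, so grouping $E'$ by $B$-endpoint partitions it into groups of size at most $d$; picking one edge per group yields a matching $M\subseteq E'$ with $|M|\ge |A|/d$.

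Next I would use the 2-layer drawing to encode $M$ as a permutation. Order the edges of $M$ as $e_1,\dots,e_m$ by the left-to-right position of their $A$-endpoint, and let $\pi(i)$ be the left-to-right rank of the $B$-endpoint of $e_i$ on its layer. Because $M$ is a matching, $\pi$ is a permutation of $\{1,\dots,m\}$, and for $i<j$ the edges $e_i,e_j$ cross in the drawing if and only if $\pi(i)>\pi(j)$. Under this correspondence, a $(k+1)$-crossing in $M$ is exactly a decreasing subsequence of $\pi$ of length $k+1$, and a non-crossing $(\ell+1)$-matching in $M$ is an increasing subsequence of $\pi$ of length $\ell+1$.

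The hypothesis rules out both, so by the Erd\H{o}s--Szekeres theorem $m\le k\ell$, and combining with $|M|\ge|A|/d$ gives $|A|\le k\ell d$, as required. There is no real obstacle here; the only mildly delicate point is to first reduce the $A$-side to degree exactly $1$ before extracting the matching, so that the matching retains size at least $|A|/d$ rather than some smaller fraction.
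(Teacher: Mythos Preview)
Your proof is correct and follows essentially the same approach as the paper: select one edge per $A$-vertex, reduce to matchings using the degree bound on $B$, and then exploit the crossing/non-crossing poset on a matching. The only cosmetic differences are that the paper \emph{partitions} its edge set $X$ into $d$ matchings $E_1,\dots,E_d$ (by ranking, for each $w\in B$, the $A$-endpoints of edges at $w$) and then applies Dilworth's Theorem to each $E_i$, whereas you \emph{extract} a single matching of size at least $|A|/d$ by pigeonhole and then apply Erd\H{o}s--Szekeres; for permutations these two tools are equivalent, and both routes arrive at the same bound $|A|\le k\ell d$.
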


\begin{proof}
Let $X$ be a set of edges in $G$ with exactly one edge in $X$ incident to each vertex in $A$. So $|X|=|A|$. Let $E_1,\dots,E_d$ be the partition of $X$, where for each edge $vw\in E_i$, if $v\in A$ and $w\in B$, then $v$ is the $i$-th neighbour of $w$ with respect to $\preceq_A$. So each $E_i$ is a matching. By Dilworth's Theorem, there is a partition $E_{i,1},\dots,E_{i,k}$ of $E_i$ such that edges in each $E_{i,j}$ are pairwise non-crossing. By assumption, $|E_{i,j}|\leq \ell$. Thus $|A|=|X|\leq k\ell d$. 
\end{proof}

\begin{proof}[Proof of \cref{2LayerPathwidth}] (We make no effort to optimise the bound on the pathwidth of $G$.) Let $A$ and $B$ be the layers in the given drawing of $G$. Let $\preceq_A$ be the total order of $A$, where $v\prec_A w$ if $v$ is to the left of $w$ in the drawing. Define $\preceq_B$ similarly. Let $\preceq$ be the poset on $E(G)$, where $vw\preceq xy$ if $v\preceq_A x$ and $w\preceq_B y$. Two edges of $G$ are comparable under $\preceq$ if and only if they do not cross. Thus every antichain under $\preceq$ is a matching of pairwise crossing edges. Hence there is no antichain of size $k+1$ under $\preceq$. By Dilworth's Theorem~\citep{Dilworth50}, there is a partition of $E(G)$ into $k$ chains under $\preceq$. Each chain is a caterpillar forest, which can be oriented with outdegree at most 1 at each vertex. So each vertex has out-degree at most $k$. For each vertex $v$, let $N^+_G[v]:=\{w\in V(G): \overrightarrow{vw}\in E(G)\}\cup\{v\}$, which has size at most $k+1$. 

Let $X=\{e_1,\dots,e_n\}$ be a maximal non-crossing matching, where $e_1\prec e_2\prec\dots\prec e_n$. Let $Y_0$ be the set of vertices of $G$ strictly to the left of $e_1$. For $i\in\{1,2,\dots,n-1\}$, let $Y_i$ be the set of vertices of $G$ strictly between $e_i$ and $e_{i+1}$. Let $Y_n$ be the set of vertices of $G$ strictly to the right of $e_n$. By the maximality of $X$, each set $Y_i$ is independent. For $i\in\{0,1,\dots,n\}$, arbitrarily enumerate $Y_i=\{v_{i,1},\dots,v_{i,m_i}\}$. Note that $v_{i,j}$ is an end-vertex of no edge in $X$  (for all $i,j$).

For each $i\in\{1,\dots,n\}$, if $e_i=xy$ then let $N_i=N^+_G[x]\cup N^+_G[y]$ and let $V_i$ be the set consisting of $N_i$ along with every vertex $v\in V(G)$ such that some arc $\overrightarrow{xv} \in E(G)$ crosses $e_i$. Note that $|N_i|\leq 2(k+1)$. For each $i\in\{0,1,\dots,n\}$ and $j\in\{1,\dots,m_i\}$, 
let $V_{i,j}:=(V_i\cup V_{i+1})\cup N^+_G[v_{i,j}]$ where $V_0:=V_{n+1}:=\emptyset$.

We now prove that
\begin{align}
\label{PathDec}
(V_{0,1},\dots,V_{0,m_0};V_1;V_{1,1},\dots,V_{1,m_0};\dots;V_n;V_{n,1},\dots,V_{n,m_n})
\end{align}
is a path-decomposition of $G$. We first show that each vertex $v$ is in some bag. If $v$ is an end-vertex of some edge $e_i$, then $v\in V_i$. Otherwise $v=v_{i,j}$ for some $i,j$, implying that $v\in V_{i,j}$, as desired. We now show that each vertex $v$ is in a consecutive sequence of bags. Suppose that $v\in V_i\cap V_p$ and $i<j<p$. Thus $e_i\prec e_j\prec e_p$. Our goal is to show that $v\in V_j$. If $v$ is an end-vertex of $e_j$, then $v\in V_j$. So we may assume that $v$ is not an end-vertex of $e_j$. By symmetry, we may assume that $v$ is to the left of the end-vertex of $e_j$ in the same layer as $v$. Thus, $v$ is not an end-vertex of $e_p$. Since $v\in V_p$, there is an arc $\overrightarrow{yv}$ that crosses $e_p$ or $y$ is an end-vertex of $e_p$. Since $e_j\prec e_p$, this arc $\overrightarrow{yv}$ crosses $e_j$. Thus $v\in V_j$, as desired. This shows that $v$ is in a consecutive (possibly empty) sequence of bags $V_i,V_{i+1},\dots,V_j$. 
If $v\in V_i$ then $v\in V_{i,j}$ for all $j\in\{1,\dots,m_i\}$, and 
$v\in V_{i-1,j}$ for all $j\in\{1,\dots,m_{i-1}\}$. It remains to consider the case in which $v$ is in no set $V_i$. Since the end-vertices of $e_i$ are in $V_i$, we have that $v=v_{i,j}$ for some $i,j$. Since $Y_i$ is an independent set, $v$ is adjacent to no other vertex in $Y_i$. Moreover, if there is an arc $\overrightarrow{xv}$ in $G$, then either $x$ is an end-vertex of $e_i$ or $e_{i-1}$, or $\overrightarrow{xv}$ crosses $e_{i-1}$ or $e_i$, implying $v$ is in $V_{i-1}\cup V_i$, which is not the case. Hence $v$ has indegree 0, implying $V_{i,j}$ is the only bag containing $v$. This completes the proof that $v$ is in a consecutive set of bags in \eqref{PathDec}. Finally, we show that the end-vertices of each edge are in some bag. Consider an arc $\overrightarrow{vw}$ in $G$. If $v=v_{i,j}$ for some $i,j$, then $v,w\in V_{i,j}$, as desired. Otherwise, $v$ is an end-vertex of some $e_i$, implying $v,w\in V_{i}$, as desired. Hence the sequence in \eqref{PathDec} defines a path-decomposition of $G$. We now bound the width of this path-decomposition. 

For $i,j\in\{0,1,\dots,n\}$, let $Y_{i,j}$ be the set of vertices $v\in Y_i$ such that there is an arc $\overrightarrow{xv}$ in $G$ with $x\in Y_j$. Suppose that $|Y_{i,j}| \geq 2k^2|j-i|+1$ for some $i,j\in\{0,1,\dots,n\}$. Without loss of generality, $i\leq j$ and 
there exists $Z\subseteq Y_{i,j}\cap A$ with $|Z| \geq k^2(j-i)+1$. Let $H$ be the subgraph of $G$ consisting of all arcs $\overrightarrow{xv}$ in $G$ with $x\in Y_j\cap B$ and $v\in Z$ (and their end-vertices). If $H$ has a non-crossing $(j-i+1)$-matching $M$, then $(X\setminus\{e_{i+1},\dots,e_j\})\cup M$ is a non-crossing matching in $G$ larger than $X$, thus contradicting the choice of $X$. Hence $H$ has no non-crossing $(j-i+1)$-matching. By construction, $H$ has no $(k+1)$-crossing, every vertex in $V(H)\cap A$ has degree at least 1 in $H$, and every vertex in $V(H)\cap B$ has degree at most $k$ in $H$. By \cref{Useful} applied to $H$ with $\ell=j-i$ and $d=k$, we have $|Z|=|V(H)\cap A|\leq k^2(j-i)$, which is a contradiction. Hence $|Y_{i,j}| \leq  2k^2|j-i|$ for all $i,j\in\{0,1,\dots,n\}$. 

For $i\in\{1,\dots,n\}$, let $P_i$ be the set of vertices $v$ in $G$ for which there is an arc $\overrightarrow{xv}$ in $G$ that crosses $e_{i-s+1},e_{i-s+2},\dots,e_i$ or crosses $e_{i},e_{i+1},\dots,e_{i+s-1}$. Suppose that $P_i\geq 4k^2(t-1)+1$. Without loss of generality, there exists $Q\subseteq P_i \cap A$ with $|Q|\geq k^2(t-1)+1$ such that for each vertex $v\in Q$ there is an arc $\overrightarrow{xv}$ in $G$ that crosses $e_{i-s+1},e_{i-s+2},\dots,e_i$. Let $H$ be the subgraph of $G$ consisting of all such arcs and their end-vertices. So $V(H)\cap A=Q$. If $H$ has a non-crossing $t$-matching $M$, then $(\{e_{i-s+1},e_{i-s+2},\dots,e_i\},M)$ is an $(s,t)$-crossing. Thus $H$ 
has no non-crossing $t$-matching. By construction, $H$ has no $(k+1)$-crossing, every vertex in $V(H)\cap A$ has degree at least 1 in $H$, and every vertex in $V(H)\cap B$ has degree at most $k$ in $H$. By \cref{Useful} applied to $H$ with $\ell=t-1$ and $d=k$, we have $|Q| = |V(H)\cap A|\leq k^2(t-1)$, which is a contradiction. Hence $|P_i| \leq 4k^2(t-1)$ for all $i\in\{1,\dots,n\}$. 

Consider a bag $V_i$, which consists of the end-vertices of $e_i$, along with every vertex $v\in V(G)$ such that some arc $\overrightarrow{xv} \in E(G)$ crosses $e_i$. 
Thus 
\begin{align*}
|V_i| & = |N_i|+  |P_i| + \sum_{a,b\in\{0,1,\dots,s-2\}} \!\!\!\!\!\!\!\! |Y_{i-a,i+b}|\\
& \leq 2(k+1)+4k^2(t-1) +  \sum_{a,b\in\{0,1,\dots,s-2\}} \!\!\!\!\!\!\!\! 2k^2|(i+b)-(i-a)|\\
& =  2(k+1) +4k^2(t-1) +2k^2 
\sum_{a,b\in\{0,1,\dots,s-2\}} \!\!\!\!\!\!\!\! (a+b)\\
& = 2(k+1) +4k^2(t-1) + 2k^2 \left( 
(s-1) \left( \sum_{a\in\{0,1,\dots,s-2\}} \!\!\! a \right) 
+
(s-1) \left( \sum_{b\in\{0,1,\dots,s-2\}} \!\!\! b \right) \right)\\
& =  2(k+1) +4k^2(t-1) +2k^2 (s-1)^2(s-2) .
\end{align*}
Hence 
\begin{align*}
|V_{i,j}|  \leq |V_i|+|V_{i+1}|+(k+1) 
& \leq 4(k+1) +8k^2(t-1) +4k^2 (s-1)^2(s-2)+(k+1)\\
& \leq 8k^2(t-1) +4k^2 (s-1)^2(s-2)+5(k+1).
\end{align*}
Therefore the path-decomposition of $G$ defined in \eqref{PathDec} has width at most 
$8k^2(t-1) +4k^2 (s-1)^2(s-2)+5k+4$. 
\end{proof}


\begin{proof}[Proof of \cref{Converse}]
Let $(X_1,\dots,X_n)$ be a path-decomposition of $G$ with width $k$. 
Let $\ell(v):=\min\{i:v\in X_i\}$ and $r(v):=\max\{i:v\in X_i\}$ for each $v\in V(G)$. 
We may assume that $\ell(v)\neq\ell(w)$ for all distinct $v,w\in V(G)$. 
Let $\{A,B\}$ be a bipartition of $G$. 
Consider the 2-layer drawing of $G$, in which each $v\in A$ is at $(\ell(v),0)$, each $v\in B$ is at $(\ell(v),1)$, and each edge is straight. 


Suppose that $\{v_1w_1,\dots,v_{k+2}w_{k+2}\}$ is a $(k+2)$-crossing in this drawing, where $v_i\in A$ and $w_i\in B$. Without loss of generality, 
\begin{align}
\label{Crossing}
\ell(v_1) < \ell(v_2)  < \dots < \ell(v_{k+2}) \quad\text{and} \quad 
\ell(w_{k+2}) < \ell(w_{k+1}) < \dots < \ell(w_1).
\end{align}
For each $i\in\{1,\dots,k+2\}$, 
if $\ell(v_i)<\ell(w_i)$ then let $I_i:=\{\ell(v_i),\dots,\ell(w_i)\}$; 
otherwise let $I_i:=\{\ell(w_i),\dots,\ell(v_i)\}$.
By \eqref{Crossing}, $I_i\cap I_j\neq\emptyset$ for distinct $i,j\in\{1,\dots,k+2\}$.
By the Helly property for intervals, there exists $p\in I_1\cap\dots\cap I_{k+2}$.
Thus $v_i$ or $w_i$ is in $X_p$ for each $i\in\{1,\dots,k+2\}$. 
Hence $|X_p|\geq k+2$, which is a contradiction. 
Therefore there is  no $(k+2)$-crossing.
 
Suppose that $(\{v_1w_1,\dots,v_sw_s\},\{x_1y_1,\dots,x_sy_s\})$ is a $(s,s)$-crossing in this drawing, where $v_i,x_i\in A$ and $w_i,y_i\in B$. 
Without loss of generality, 
\begin{align*}
& \ell(v_1) < \dots < \ell(v_s) < \ell(x_1) < \dots < \ell(x_s)\quad\text{and}\\
& \ell(y_1) < \dots < \ell(y_s) < \ell(w_1) < \dots < \ell(w_s).
\end{align*}
We claim that $k\geq s$. 
If $\ell(v_s) < \ell(w_1)$ then 
$\ell(v_1) < \dots < \ell(v_s) < \ell(w_1) < \dots < \ell(w_s)$, implying 
$v_1,\dots,v_s,w_1\in X_{\ell(w_1)}$, and $k\geq s$. 
If $\ell(y_s) < \ell(x_1)$ then 
$\ell(y_1) < \dots < \ell(y_s) < \ell(x_1) <  \dots < \ell(x_s)$, 
implying 
$y_1,\dots,y_s,x_1\in X_{\ell(x_1)}$, and $k\geq s$. 
Now assume that 
$\ell(w_1) < \ell(v_s) $ and $\ell(x_1) < \ell(y_s)$.
Thus
$\ell(w_1) < \ell(v_s) < \ell(x_1) < \ell(y_s)$, which 
is a contradiction since 
$\ell(y_s) < \ell(w_1)$.
Hence $s\leq k$ and the drawing of $G$ has no  $(k+1,k+1)$-crossing. 
\end{proof}


{
\let\oldthebibliography=\thebibliography
\let\endoldthebibliography=\endthebibliography
\renewenvironment{thebibliography}[1]{%
\begin{oldthebibliography}{#1}%
	\setlength{\parskip}{0ex}%
	\setlength{\itemsep}{0ex}%
}{\end{oldthebibliography}}


\begin{thebibliography}{24}
	\providecommand{\natexlab}[1]{#1}
	\providecommand{\msn}[1]{MR:\,\href{http://www.ams.org/mathscinet-getitem?mr=MR{#1}}{#1}}
	\providecommand{\ZBL}[1]{Zbl:\,\href{https://www.zentralblatt-math.org/zmath/en/search/?q=an:#1}{#1}}
	\providecommand{\url}[1]{\texttt{#1}}
	\providecommand{\urlprefix}{}
	\expandafter\ifx\csname urlstyle\endcsname\relax
	\providecommand{\doi}[1]{doi:\discretionary{}{}{}#1}\else
	\providecommand{\doi}{doi:\discretionary{}{}{}\begingroup
		\urlstyle{rm}\Url}\fi
	
	\bibitem[{Angelini et~al.(2020)Angelini, Lozzo, F{\"{o}}rster, and
		Schneck}]{ALFS20}
	\textsc{Patrizio Angelini, Giordano~Da Lozzo, Henry F{\"{o}}rster, and Thomas
		Schneck}.
	\newblock \href{https://doi.org/10.1007/978-3-030-68766-3\_32}{$2$-layer
		$k$-planar graphs---density, crossing lemma, relationships, and pathwidth}.
	\newblock In \textsc{David Auber and Pavel Valtr}, eds., \emph{Proc. 28th
		International Symposium on Graph Drawing and Network Visualization ({GD}
		2020)}, vol. 12590 of \emph{Lecture Notes in Computer Science}, pp. 403--419.
	Springer, 2020.
	
	\bibitem[{Bannister et~al.(2019)Bannister, Devanny, Dujmovi\'c, Eppstein, and
		Wood}]{BDDEW19}
	\textsc{Michael~J. Bannister, William~E. Devanny, Vida Dujmovi\'c, David
		Eppstein, and David~R. Wood}.
	\newblock \href{https://doi.org/10.1007/s00453-018-0487-5}{Track layouts,
		layered path decompositions, and leveled planarity}.
	\newblock \emph{Algorithmica}, 81(4):1561--1583, 2019.
	
	\bibitem[{Biedl et~al.(2020)Biedl, Chimani, Derka, and Mutzel}]{BCDM20}
	\textsc{Therese~C. Biedl, Markus Chimani, Martin Derka, and Petra Mutzel}.
	\newblock \href{https://doi.org/10.1007/s00453-019-00653-x}{Crossing number for
		graphs with bounded pathwidth}.
	\newblock \emph{Algorithmica}, 82(2):355--384, 2020.
	
	\bibitem[{Bienstock et~al.(1991)Bienstock, Robertson, Seymour, and
		Thomas}]{BRST91}
	\textsc{Dan Bienstock, Neil Robertson, Paul~D. Seymour, and Robin Thomas}.
	\newblock \href{https://doi.org/10.1016/0095-8956(91)90068-U}{Quickly excluding
		a forest}.
	\newblock \emph{J. Combin. Theory Ser. B}, 52(2):274--283, 1991.
	
	\bibitem[{Bodlaender(1998)}]{Bodlaender-TCS98}
	\textsc{Hans~L. Bodlaender}.
	\newblock \href{https://doi.org/10.1016/S0304-3975(97)00228-4}{A partial
		$k$-arboretum of graphs with bounded treewidth}.
	\newblock \emph{Theoret. Comput. Sci.}, 209(1-2):1--45, 1998.
	
	\bibitem[{Cornelsen et~al.(2004)Cornelsen, Schank, and Wagner}]{CSW04}
	\textsc{Sabine Cornelsen, Thomas Schank, and Dorothea Wagner}.
	\newblock \href{https://doi.org/10.7155/jgaa.00087}{Drawing graphs on two and
		three lines}.
	\newblock \emph{J. Graph Algorithms Appl.}, 8(2):161--177, 2004.
	
	\bibitem[{Di~Giacomo et~al.(2014)Di~Giacomo, Didimo, Eades, and
		Liotta}]{DDEL14}
	\textsc{Emilio Di~Giacomo, Walter Didimo, Peter Eades, and Giuseppe Liotta}.
	\newblock \href{https://doi.org/10.1007/s00453-012-9706-7}{2-layer right angle
		crossing drawings}.
	\newblock \emph{Algorithmica}, 68(4):954--997, 2014.
	
	\bibitem[{Diestel(1995)}]{Diestel-CPC95}
	\textsc{Reinhard Diestel}.
	\newblock \href{https://doi.org/10.1017/S0963548300001450}{Graph minors. {I}.
		{A} short proof of the path-width theorem}.
	\newblock \emph{Combin. Probab. Comput.}, 4(1):27--30, 1995.
	
	\bibitem[{Dilworth(1950)}]{Dilworth50}
	\textsc{Robert~P. Dilworth}.
	\newblock \href{https://doi.org/10.2307/1969503}{A decomposition theorem for
		partially ordered sets}.
	\newblock \emph{Ann. of Math. (2)}, 51:161--166, 1950.
	
	\bibitem[{Dujmovi\'c et~al.(2006)Dujmovi\'c, Fellows, Hallett, Kitching,
		Liotta, McCartin, Nishimura, Ragde, Rosamond, Suderman, Whitesides, and
		Wood}]{DFHKLMNRRSWW06}
	\textsc{Vida Dujmovi\'c, Michael~R. Fellows, Michael~T. Hallett, Matthew
		Kitching, Giuseppe Liotta, Catherine McCartin, Naomi Nishimura, Prabhakar
		Ragde, Frances~A. Rosamond, Matthew Suderman, Sue Whitesides, and David~R.
		Wood}.
	\newblock \href{https://doi.org/10.1007/s00453-005-1181-y}{A fixed-parameter
		approach to 2-layer planarization}.
	\newblock \emph{Algorithmica}, 45(2):159--182, 2006.
	
	\bibitem[{Dujmovi{\'c} et~al.(2008)Dujmovi{\'c}, Fellows, Kitching, Liotta,
		McCartin, Nishimura, Ragde, Rosamond, Whitesides, and
		Wood}]{Dujmovic-etal-Algo08}
	\textsc{Vida Dujmovi{\'c}, Michael~R. Fellows, Matthew Kitching, Giuseppe
		Liotta, Catherine McCartin, Naomi Nishimura, Prabhakar Ragde, Frances
		Rosamond, Sue Whitesides, and David~R. Wood}.
	\newblock \href{https://doi.org/10.1007/s00453-007-9151-1}{On the parameterized
		complexity of layered graph drawing}.
	\newblock \emph{Algorithmica}, 52(2):267--292, 2008.
	
	\bibitem[{Dujmovi{\'c} et~al.(2005)Dujmovi{\'c}, Morin, and Wood}]{DMW05}
	\textsc{Vida Dujmovi{\'c}, Pat Morin, and David~R. Wood}.
	\newblock \href{https://doi.org/10.1137/S0097539702416141}{Layout of graphs
		with bounded tree-width}.
	\newblock \emph{SIAM J. Comput.}, 34(3):553--579, 2005.
	
	\bibitem[{Dujmovi\'c et~al.(2021)Dujmovi\'c, Morin, and Yelle}]{DMY21}
	\textsc{Vida Dujmovi\'c, Pat Morin, and C{\'{e}}line Yelle}.
	\newblock \href{https://doi.org/10.7155/jgaa.00549}{Two results on layered
		pathwidth and linear layouts}.
	\newblock \emph{J. Graph Algorithms Appl.}, 25(1):43--57, 2021.
	
	\bibitem[{Dujmovi{\'c} et~al.(2004)Dujmovi{\'c}, P\'or, and Wood}]{DPW04}
	\textsc{Vida Dujmovi{\'c}, Attila P\'or, and David~R. Wood}.
	\newblock \href{http://dmtcs.episciences.org/315}{Track layouts of graphs}.
	\newblock \emph{Discrete Math. Theor. Comput. Sci.}, 6(2):497--522, 2004.
	
	\bibitem[{Dujmovi{\'c} and Whitesides(2004)}]{DujWhi-Algo04}
	\textsc{Vida Dujmovi{\'c} and Sue Whitesides}.
	\newblock \href{https://doi.org/10.1007/s00453-004-1093-2}{An efficient fixed
		parameter tractable algorithm for 1-sided crossing minimization}.
	\newblock \emph{Algorithmica}, 40(1):15--31, 2004.
	
	\bibitem[{Eades and Whitesides(1994)}]{EadesWhitesides94}
	\textsc{Peter Eades and Sue Whitesides}.
	\newblock \href{https://doi.org/10.1016/0304-3975(94)90179-1}{Drawing graphs in
		two layers}.
	\newblock \emph{Theoret. Comput. Sci.}, 131(2):361--374, 1994.
	
	\bibitem[{Eades and Wormald(1994)}]{EadesWormald94}
	\textsc{Peter Eades and Nicholas~C. Wormald}.
	\newblock \href{https://doi.org/10.1007/BF01187020}{Edge crossings in drawings
		of bipartite graphs}.
	\newblock \emph{Algorithmica}, 11(4):379--403, 1994.
	
	\bibitem[{Felsner et~al.(2003)Felsner, Liotta, and Wismath}]{FLW-JGAA03}
	\textsc{Stefan Felsner, Giussepe Liotta, and Stephen~K. Wismath}.
	\newblock \href{https://doi.org/10.7155/jgaa.00075}{Straight-line drawings on
		restricted integer grids in two and three dimensions}.
	\newblock \emph{J. Graph Algorithms Appl.}, 7(4):363--398, 2003.
	
	\bibitem[{Harary and Schwenk(1972)}]{HS72}
	\textsc{Frank Harary and Allen Schwenk}.
	\newblock A new crossing number for bipartite graphs.
	\newblock \emph{Utilitas Math.}, 1:203--209, 1972.
	
	\bibitem[{Hlin{\v{e}}ny(2003)}]{Hliney-JCTB03}
	\textsc{Petr Hlin{\v{e}}ny}.
	\newblock \href{https://doi.org/10.1016/S0095-8956(03)00037-6}{Crossing-number
		critical graphs have bounded path-width}.
	\newblock \emph{J. Combin. Theory Ser. B}, 88(2):347--367, 2003.
	
	\bibitem[{Nagamochi(2005{\natexlab{a}})}]{Nagamochi05a}
	\textsc{Hiroshi Nagamochi}.
	\newblock \href{https://doi.org/10.1007/s00454-005-1168-0}{An improved bound on
		the one-sided minimum crossing number in two-layered drawings}.
	\newblock \emph{Discrete Comput. Geom.}, 33(4):565--591, 2005{\natexlab{a}}.
	
	\bibitem[{Nagamochi(2005{\natexlab{b}})}]{Nagamochi05b}
	\textsc{Hiroshi Nagamochi}.
	\newblock \href{https://doi.org/10.1016/j.tcs.2004.10.042}{On the one-sided
		crossing minimization in a bipartite graph with large degrees}.
	\newblock \emph{Theoret. Comput. Sci.}, 332(1-3):417--446, 2005{\natexlab{b}}.
	
	\bibitem[{Robertson and Seymour(1983)}]{RS-I}
	\textsc{Neil Robertson and Paul Seymour}.
	\newblock \href{https://doi.org/10.1016/0095-8956(83)90079-5}{Graph minors.
		{I}. {E}xcluding a forest}.
	\newblock \emph{J. Combin. Theory Ser. B}, 35(1):39--61, 1983.
	
	\bibitem[{Suderman(2004)}]{Suderman-IJCGA04}
	\textsc{Matthew Suderman}.
	\newblock \href{https://doi.org/10.1142/S0218195904001433}{Pathwidth and
		layered drawings of trees}.
	\newblock \emph{Internat. J. Comput. Geom. Appl.}, 14(3):203--225, 2004.
	
\end{thebibliography}
\def\soft#1{\leavevmode\setbox0=\hbox{h}\dimen7=\ht0\advance \dimen7
	by-1ex\relax\if t#1\relax\rlap{\raise.6\dimen7
		\hbox{\kern.3ex\char'47}}#1\relax\else\if T#1\relax
	\rlap{\raise.5\dimen7\hbox{\kern1.3ex\char'47}}#1\relax \else\if
	d#1\relax\rlap{\raise.5\dimen7\hbox{\kern.9ex \char'47}}#1\relax\else\if
	D#1\relax\rlap{\raise.5\dimen7 \hbox{\kern1.4ex\char'47}}#1\relax\else\if
	l#1\relax \rlap{\raise.5\dimen7\hbox{\kern.4ex\char'47}}#1\relax \else\if
	L#1\relax\rlap{\raise.5\dimen7\hbox{\kern.7ex
			\char'47}}#1\relax\else\message{accent \string\soft \space #1 not
		defined!}#1\relax\fi\fi\fi\fi\fi\fi}

\end{document}